\documentclass[11pt,letterpaper,reqno]{amsart}
\usepackage[includehead,includefoot,margin=30mm]{geometry}
\usepackage{comment}
\usepackage{times}
\usepackage{amsfonts}
\usepackage{amsmath}
\usepackage{amssymb}
\usepackage{enumerate}
\usepackage{amsthm}
\usepackage[all]{xy}
\usepackage{graphicx}
\usepackage{nicefrac}
\usepackage{color}
\usepackage{hyperref}

\hypersetup{
    colorlinks=true,
    linkcolor=blue,
    citecolor=blue,
    filecolor=blue,
    urlcolor=blue
}
\usepackage[nameinlink,capitalize,noabbrev]{cleveref}

\usepackage{calrsfs}

\newtheorem{theorem}{Theorem}[section]

\newtheorem{lemma}[theorem]{Lemma}

{\theoremstyle{remark}
  \newtheorem{remark}[theorem]{Remark}}
{\theoremstyle{definition}

}

\newtheorem{introthm}{Theorem}

\newcommand{\ZZ}[0]{\ensuremath{\mathbf{Z}}}
\newcommand{\kk}[0]{\ensuremath{\mathbf{k}}}

\newcommand{\Char}[0]{\ensuremath{\operatorname{char}}}

\newcommand{\RR}[0]{\ensuremath{\mathbf{R}}}

\begin{document}

\title[A one-step counterexample to the normalized Nash blowup conjecture]{A one-step counterexample to the \\ normalized Nash blowup conjecture}

\author[A. Liendo]{Alvaro Liendo} %
\address{Instituto de Matem\'aticas, Universidad de Talca, Talca, Chile}  %
\email{aliendo@utalca.cl}

\author[A.J. Palomino]{Ana Julisa Palomino}
\address{Instituto de Matem\'aticas, Universidad de Talca, Talca, Chile}  %
\email{ana.palomino@utalca.cl}

\author[G. Rodr\'iguez]{Gonzalo Rodr\'iguez} %
\address{Instituto de Matem\'aticas, Universidad de Talca, Talca, Chile}  %
\email{gonzalo.rodriguez@utalca.cl}

\date{\today}

\thanks{{\it 2020 Mathematics Subject
    Classification}: 14B05; 14E15; 14M25.\\
  \mbox{\hspace{11pt}}{\it Key words}: Normalized Nash blowup, resolution of singularities, toric varieties, positive characteristic.\\
  \mbox{\hspace{11pt}} The first author was partially supported by Fondecyt Project 1240101 and 
by Fondecyt Exploraci\'on Project 13250049. The second author was partially 
supported by CONICYT-PFCHA/Doctorado Nacional, folio 21240560, and by 
Fondecyt Exploraci\'on Project 13250049. The third author was partially 
supported by CONICYT-PFCHA/Doctorado Nacional, folio 21210992.}

\begin{abstract}
We construct an explicit normal singular affine toric variety $X$ of
dimension five over an algebraically closed field of characteristic three
such that the normalized Nash blowup of $X$ already contains an open
affine subset isomorphic to~$X$. Combined with previously known examples,
this yields one-step counterexamples in every dimension greater than or
equal to five and every characteristic. The characteristic-three case is
the most delicate: the previously known counterexample in dimension four
requires a two-step iteration of the normalized Nash blowup, and our
example demonstrates that in dimension five and higher the minimal number
of iterations needed to produce a loop is one.
\end{abstract}

\maketitle

\section*{Introduction}

Let $X \subseteq \kk^n$ be an equidimensional algebraic variety of
dimension $d$ over an algebraically closed field $\kk$.
The \emph{Nash blowup} of $X$, introduced independently by
J.\,G.~Semple \cite{Se54} and J.~Nash \cite{Sp90}, is the proper
birational map $\nu\colon X^* \to X$ obtained by taking the Zariski
closure of the graph of the Gauss map
\[
  \Phi\colon X\setminus \operatorname{Sing}(X)\longrightarrow
    \operatorname{Grass}(d,n),\quad x\mapsto T_x X,
\]
together with the natural projection.  The map $\nu$ is an isomorphism
over the smooth locus.  One can also compose $\nu$ with the
normalization to obtain the \emph{normalized Nash blowup}
$\overline{\nu}\colon \overline{X^*}\to X$.

Both Nash and Semple proposed to resolve singularities by iterating
these constructions.  The question has attracted sustained attention
over more than half a century. Nobile proved that the Nash blowup
is an isomorphism if and only if $X$ is smooth, in characteristic
zero~\cite{Nob75}; the analogous statement for the normalized Nash
blowup in arbitrary characteristic was established by Duarte and
N\'u\~nez Betancourt~\cite{DuNu22}. Positive results for specific
families of varieties were obtained in \cite{Nob75, Reb, GS1, GS2,
Hi83, Sp90, GS3, Ataetal, GrMi12, GPTe14, DuarSurf, DG, DJN24, DDR,
CDLLcharfree}.
\medskip

Negative answers to the conjecture were given in \cite{CDLAL} and
\cite{CDLL3d}.  In \cite{CDLAL}, it is shown that in every dimension
$d\geq 4$ and every characteristic, there exist normal singular affine
toric varieties for which neither iterating the Nash blowup nor
iterating the normalized Nash blowup resolves the singularities.
Subsequently, \cite{CDLL3d} showed that the (non-normalized) Nash blowup
conjecture also fails in dimension three: there exists a non-normal
affine toric variety $X$ of dimension $3$ over a field of characteristic
zero such that the second iteration of the Nash blowup of $X$ contains
an open affine subset isomorphic to~$X$.
More precisely, the following is proved in \cite{CDLAL}.

\begin{introthm}[{\cite{CDLAL}}]\label{thm:CDLAL}
For every $d\geq 4$ and every algebraically closed field $\kk$,
there exists a normal singular affine algebraic variety $X$ of
dimension $d$ over $\kk$ such that:
\begin{itemize}
  \item[$(i)$] If $\Char(\kk)=0$, then the Nash blowup and the
    normalized Nash blowup of $X$ each contain an open affine subset
    isomorphic to $X$.
  \item[$(ii)$] If $\Char(\kk)$ is positive and different from $3$,
    then the normalized Nash blowup of $X$ contains an open affine
    subset isomorphic to $X$.
  \item[$(iii)$] If $\Char(\kk)=3$, then the \emph{second} iteration
    of the normalized Nash blowup of $X$ contains an open affine
    subset isomorphic to $X$.
\end{itemize}
\end{introthm}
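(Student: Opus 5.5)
The plan is to argue entirely in the toric setting, where the normalized Nash blowup admits a purely combinatorial description. Let $X=X_\sigma$ be the normal affine toric variety of a strongly convex rational polyhedral cone $\sigma\subseteq N_\RR$ of full dimension $d$, and let $\mathcal H$ be the Hilbert basis of $\sigma^\vee\cap M$. By the known description (González-Sprinberg in characteristic zero, and its characteristic-free refinement in \cite{DuNu22}), the normalized Nash blowup of $X_\sigma$ is the toric variety of the normal fan of the polytope
\[
  P_\kk=\operatorname{conv}\Bigl\{\,m_1+\dots+m_d \;:\; m_i\in\mathcal H,\ \det(m_1,\dots,m_d)\neq 0 \text{ in } \kk\,\Bigr\} + \sigma^\vee .
\]
Thus $P_\kk$ depends on $\Char(\kk)$ only through which primes divide the determinants $\det(m_1,\dots,m_d)$. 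An open affine subset of the blowup isomorphic to $X$ is the same as a maximal cone $\tau$ of this normal fan that is $GL(N)$-equivalent to $\sigma$. The statement then reduces to exhibiting, for each $d\ge 4$, a cone $\sigma$ and a vertex of $P_\kk$ whose dual cone is a lattice-isomorphic copy of $\sigma$.

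First I would treat $d=4$ with an explicit simplicial cone. I would choose $\sigma$ with a few rays in a near-unimodular configuration, but with multiplicity large enough that $\mathcal H$ contains non-ray elements. Then I would compute $\mathcal H$, list all $d$-subsets with nonzero determinant, and find a vertex $v$ of $P$ whose normal cone $\tau$ has the same rays as $\sigma$ after a unimodular change of coordinates; this can be certified by exhibiting the matrix in $GL_4(\ZZ)$. To pass to $d\ge5$ I would take $\sigma\times\RR_{\ge0}^{d-4}$, that is $X\times\kk^{d-4}$. The Hilbert basis acquires only the standard basis vectors of the new factor, and the relevant determinants factor accordingly. Hence the normalized Nash blowup of the product is the product of the blowups, and the loop persists.

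The characteristic cases are handled next. Over $\kk$ of characteristic $p$, some subsets drop out of the generating set of $P_\kk$ exactly when $p$ divides their determinant. I would design $\sigma$ so that all determinants involved in the vertex $v$ and its neighbouring edges are powers of $3$ or equal to $\pm1$. Then the vertex $v$ and its normal cone are unchanged for every $p\ne3$, which gives $(i)$ and $(ii)$. For the non-normalized statement in $(i)$, I would additionally check that the semigroup generated at $v$ is already saturated; this holds because $\tau\cong\sigma$ and $X$ is normal, once the generators of the Nash blowup chart are shown to generate $\tau^\vee\cap M$.

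For $(iii)$, $p=3$ kills some generators, so $P_\kk$ changes and $v$ is no longer a vertex. I would instead locate a cone $\tau_1$ of the first blowup and repeat the computation for $X_{\tau_1}$, finding a cone $\tau_2$ of its normalized Nash blowup with $\tau_2\cong\sigma$. I expect the main obstacle to be this search for a single $\sigma$ working uniformly across characteristics, together with the verification that the candidate is actually a vertex. That verification is a finite but sizable convex-hull computation, and I would carry it out by exhibiting, for each edge direction at $v$, a linear functional minimized uniquely at $v$.
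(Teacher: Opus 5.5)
Your framework is the one used in \cite{CDLAL}. It describes the normalized Nash blowup through the Newton polyhedron of the logarithmic Jacobian ideal, with determinants read in $\kk$; note that the prime-characteristic version is due to \cite{DJN24}, not \cite{DuNu22}. It passes to $d\geq 5$ via $X\times\kk^{d-4}$, which is \cite[Lemma~1]{CDLAL}, and your argument for that product step is correct. Your stability criterion across characteristics is also sound: discarding generators only shrinks $P_\kk$, so if $v$ and its edge-neighbours survive modulo $p$, the tangent cone at $v$ is unchanged.

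The genuine gap is that the proposal stops exactly where the proof begins. The theorem is an existence statement whose whole content is an explicit cone plus a finite verification, and you never produce a cone. You propose to design around three constraints: a simplicial $\sigma$; all relevant determinants equal to $\pm1$ or to powers of $3$; and one $\sigma$ that loops after one step for $p\neq3$ and after two steps for $p=3$. You do not show these are simultaneously realizable, and your paragraph on $(iii)$ only restates the conclusion. Nothing in the general theory guarantees that such loops exist. The paper reports that extensive computer searches found no one-step loop in dimension four in characteristic $3$, which shows how rare these configurations are.

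Two further remarks on your design choices. The statement lets $X$ depend on $\kk$, so the uniformity you single out as the main obstacle is self-imposed. Also, the examples recorded in the paper are not simplicial. The characteristic-$3$ cone in dimension four is generated by the five vectors $(1,0,0,0)$, $(0,1,0,0)$, $(1,2,3,0)$, $(0,0,0,1)$, $(0,3,3,1)$ of $\RR^4$, and it arises as a chart of the fourth iterated normalized Nash blowup of the Reeves cone. The five-dimensional example has eight rays.

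What is needed is the explicit data followed by a verification like the one in Section~\ref{sec:proof}:
\begin{itemize}
\item the Hilbert basis;
\item the subsets $A$ with determinant nonzero in $\kk$;
\item the set $\mathcal{G}_A$ and its reduction to a generating set;
\item a unimodular matrix carrying $\mathcal{H}(S)$ onto generators of the chart semigroup.
\end{itemize}
That last step also repairs a circular point in your handling of the non-normalized blowup in $(i)$. Knowing $\tau\cong\sigma$ and that $X$ is normal identifies only the normalized chart $X(\overline{S_A})$. Whether $S_A$ itself is saturated is exactly what must be proved, and ``once the generators are shown to generate $\tau^\vee\cap M$'' restates it rather than proving it. The paper settles this by showing that $U$ maps $\mathcal{H}(S)$ onto a generating set of $S_A$, so $S_A=U(S)$ is saturated.

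Two minor points: a toric chart lattice-isomorphic to $\sigma$ is sufficient for an open affine subset isomorphic to $X$, not equivalent to it; and $P_\kk$ is a polyhedron rather than a polytope.
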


A notable feature of Theorem~\ref{thm:CDLAL} is the asymmetry in
characteristic three: while every other characteristic admits a
\emph{one-step} counterexample (a variety that reappears already after
the first normalized Nash blowup), characteristic three required a
\emph{two-step} loop, at least in dimension four.  The counterexample in that case is the normal affine toric variety
associated with the cone generated in $\RR^4$ by the columns of the matrix
\[
  \left[
  \begin{array}{rrrrr}
    1 & 0 & 1 & 0 & 0 \\
    0 & 1 & 2 & 0 & 3 \\
    0 & 0 & 3 & 0 & 3 \\
    0 & 0 & 0 & 1 & 1
  \end{array}\right],
\]
which arises as a chart in the fourth iteration of the normalized Nash blowup
of the toric variety associated with the Reeves cone, that is,
the cone generated by the columns of the matrix
\[
  \left[
  \begin{array}{rrrr}
    1 & 0 & 0 & 1 \\
    0 & 1 & 0 & 1 \\
    0 & 0 & 1 & 1 \\
    0 & 0 & 0 & 5
  \end{array}\right].
\]
Extensive computer searches conducted by the authors of the present paper
did not reveal any four-dimensional one-step counterexample in
characteristic three. The question therefore arises naturally:
\emph{Does every characteristic admit a one-step counterexample
in some dimension?} 

The following theorem is the main result of the present paper, which answers
this question affirmatively for every characteristic in dimension five and higher.

\begin{introthm}\label{thm:main}
For every $d \geq 5$ and every algebraically closed field $\kk$,
there exists a normal singular affine toric variety $X$ of dimension
$d$ over $\kk$ such that the normalized Nash blowup of $X$ contains
an open affine subset isomorphic to $X$.
\end{introthm}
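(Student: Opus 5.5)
The proof splits by characteristic. By Theorem~\ref{thm:CDLAL}$(i)$--$(ii)$, for every $d\ge 4$ and every $\kk$ with $\Char(\kk)\neq 3$ there is already a one-step counterexample in dimension $d$, so only $\Char(\kk)=3$ remains. For this case I would take a single five-dimensional strongly convex rational cone $\sigma\subseteq\RR^5$ with a one-step loop, and reach every $d\ge5$ by taking products with affine space. Indeed, if $X=X_\sigma$ and $Y=X\times\mathbb{A}^{m}$, then the module of differentials of $Y$ is $\Omega_X\boxplus\mathcal{O}^{m}$, so $Y^*=X^*\times\mathbb{A}^m$. Normalization also commutes with $\times\mathbb{A}^m$. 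Hence $\overline{Y^*}\cong\overline{X^*}\times\mathbb{A}^m$, which contains an open affine subset isomorphic to $U\times\mathbb{A}^m\cong Y$ whenever $U\subseteq\overline{X^*}$ is open affine with $U\cong X$. Torically, this is the cone $\sigma\times\{0\}\subseteq\RR^{5+m}$, and $Y$ is still normal and singular.

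For the five-dimensional example I would use the combinatorial description of the normalized Nash blowup of a normal affine toric variety in arbitrary characteristic. Let $\sigma^\vee\cap M$ have Hilbert basis $\mathcal{H}$. The normalized Nash blowup is the normal toric variety of the fan obtained by normalizing the blowup of the ideal generated by the monomials $\chi^{m_1+\dots+m_d}$. Here $\{m_1,\dots,m_d\}\subseteq\mathcal{H}$ ranges over the subsets whose $d\times d$ determinant is nonzero in $\kk$, that is, not divisible by $p=3$. Its maximal cones are the normal-fan cones $\sigma_\mu=\{v\in\sigma:\langle \mu,v\rangle=\min_{\mu'}\langle\mu',v\rangle\}$ as $\mu$ runs over the vertices of the Newton polyhedron $\operatorname{conv}\{\sum m_i\}+\sigma^\vee$. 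I would then produce an explicit cone $\sigma$ and verify three things:
\begin{enumerate}[(a)]
\item compute $\mathcal{H}$ and the finitely many sums $\sum m_i$ with determinant prime to $3$;
\item find a vertex $\mu$ whose cone $\sigma_\mu$ is five-dimensional;
\item exhibit $A\in GL_5(\ZZ)$ with $A(\sigma_\mu)=\sigma$.
\end{enumerate}
Since $\sigma$ is not regular (for instance, its rays are not part of a lattice basis), $X$ is singular, and $X_{\sigma_\mu}\cong X_\sigma$ is an open affine chart of $\overline{X^*}$.

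To find $\sigma$, I would mimic the four-dimensional construction. Start from the known two-step cone $\tau$ in Theorem~\ref{thm:CDLAL}$(iii)$ and from cones arising in iterated normalized Nash blowups of Reeves-type cones $\operatorname{cone}(e_1,\dots,e_4,(1,1,1,1,r))$ in $\RR^5$. Then search by computer, in the spirit of the searches mentioned in the introduction, for a chart of $\overline{X^*}$ that is unimodularly equivalent to the original cone. A natural first attempt is a five-dimensional simplicial or near-simplicial cone with one extra ray, given by an upper-triangular generator matrix. Its entries should be chosen so that the relevant determinants involve factors of $3$ in a controlled way, which makes the characteristic-three exclusion select a chart that reproduces the shape of $\sigma$.

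The main obstacle is step (b)--(c). Finding the cone at all requires a search, and then verifying the loop requires a complete Hilbert basis computation, the determinant test modulo $3$ for all $\binom{|\mathcal{H}|}{5}$ subsets, and confirmation that $\mu$ is really a vertex, for example by exhibiting an interior vector $v\in\sigma_\mu$ on which $\mu$ is the unique minimizer. I would present the certificate explicitly: list the Hilbert basis, the vertex $\mu$, the rays of $\sigma_\mu$, and the matrix $A$, with the remaining finite checks left as routine.
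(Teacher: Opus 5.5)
Your reductions are fine. For $\Char(\kk)\neq 3$ you cite Theorem~\ref{thm:CDLAL}$(i)$--$(ii)$ exactly as the paper does. Your product argument for passing from dimension $5$ to all $d\geq 5$ is a direct proof of what the paper imports as \cite[Lemma~1]{CDLAL}. There is one small slip: with your convention $X_\sigma=\operatorname{Spec}\kk[\sigma^\vee\cap M]$, the cone $\sigma\times\{0\}$ gives $X\times(\kk^*)^m$, not $X\times\mathbb{A}^m$; you want $\sigma\times\RR_{\geq0}^m$. Either product works, so this is harmless.

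The genuine gap is that the characteristic-three case, which is the only new content of the theorem, is not proved. You describe a computer search and the shape of the certificate you would present, but you never produce a cone, a vertex $\mu$, or a matrix in $GL_5(\ZZ)$. An existence statement cannot be discharged by a search whose success is not established. The paper itself reports that analogous searches in dimension four found nothing, so nothing guarantees that your proposed search space (Reeves-type or near-simplicial cones) contains an example.

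What is missing is an explicit example with its verification, which is exactly what the paper supplies:
\begin{itemize}
\item the semigroup cone $\omega$ (your $\sigma^\vee$), spanned by the eight columns of $B$;
\item its Hilbert basis $h_1,\ldots,h_8,h_9$ with $h_9=(1,1,0,1,-1)$, obtained from a unimodular triangulation and the grading $\mathcal{L}$;
\item the chart $A=\{h_1,h_2,h_4,h_5,h_6\}$, with $\det=-1$;
\item the vanishing pattern of the twenty swap determinants, which determines $\mathcal{G}_A$;
\item the reduction of $\mathcal{G}_A$ to nine generators $H$;
\item a unimodular $U$ with $U(\mathcal{H}(S))=H$.
\end{itemize}
This last step simultaneously shows three things: $S_A\cong S$; $S_A$ is pointed, so $A$ indexes a genuine chart (in your language, $\sum A$ is a vertex); and $S_A$ is saturated, so no normalization is needed.

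Note also that your heuristic of engineering factors of $3$ so that the mod-$3$ exclusion ``selects'' the chart is not how this example works. Every swap determinant for this $A$ lies in $\{0,\pm1,\pm2\}$, so reduction mod $3$ does not change $\mathcal{G}_A$ at all. The example succeeds by avoiding nonzero multiples of $3$, not by exploiting them.
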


If $\Char(\kk) \neq 3$, this follows directly from
Theorem~\ref{thm:CDLAL} (i) and~(ii). If $\Char(\kk) = 3$,
we construct an explicit five-dimensional variety $X$ satisfying
the theorem, which combined with \cite[Lemma~1]{CDLAL} achieves
the proof. This variety $X$ is the normal affine toric variety
associated with the cone $\omega \subset \mathbf{R}^5$ generated
by the columns of the matrix
\[
  B = \left[
  \begin{array}{rrrrrrrr}
    1 & 0 & 0 & 0 & 0 &  2 &  1 &  1 \\
    0 & 1 & 0 & 0 & 0 &  2 &  2 &  2 \\
    0 & 0 & 1 & 0 & 0 & -1 & -1 &  0 \\
    0 & 0 & 0 & 1 & 0 &  1 &  1 &  0 \\
    0 & 0 & 0 & 0 & 1 & -2 & -1 & -1
  \end{array}\right].
\]
The proof consists of an
explicit combinatorial computation in the spirit of \cite{CDLAL},
using the description of the normalized Nash blowup of a toric variety
due to Gonz\'alez-Sprinberg \cite{GS1} and Duarte, Jeffries, and
N\'u\~nez Betancourt \cite{DJN24}.

Compared with the characteristic-three example in dimension four from
\cite{CDLAL}, the present example has two advantages: it produces a
loop already at the first iteration, and it is normal (so it also
provides a counterexample to the normalized Nash blowup conjecture for
normal varieties).

On the non-normal side, a three-dimensional counterexample to the
non-normalized Nash blowup conjecture in characteristic zero was
recently obtained in \cite{CDLL3d}, where a non-normal toric variety
of dimension three is shown to re-appear after two iterations of the
(non-normalized) Nash blowup.

The counterexample in the present paper was found via computer experimentation
following the computational strategy developed in \cite{CDLAL,CDLLcomputational},
using the software SageMath~\cite{sagemath}. Concretely, we populated a directed
graph whose vertices are affine semigroups and whose edges record the normalized
Nash blowup operation, and searched for cycles of length one within it.

\section{Preliminaries on toric varieties and Nash blowups}

We gather here the background needed for the proof of
Theorem~\ref{thm:main}.  Standard references for toric varieties are
\cite{fulton1993introduction, oda1983convex, cox2011toric}; we follow the conventions of \cite{cox2011toric},
and in particular we do \emph{not} require toric varieties to be normal.

\subsection*{Affine semigroups and toric varieties}

An \emph{affine semigroup} is a finitely generated, cancellative,
commutative monoid that embeds into a free abelian group.  We fix a
lattice $M \cong \ZZ^d$ and without loss of generality we assume that $S \subset M$ and that the group generated by $S$ is $M$. Let $M_{\RR}:= M \otimes_{\ZZ}\RR$.

The \emph{polyhedral cone} generated by $S$ is
\[
  \omega = \operatorname{Cone}(S)
    = \left\{\,\sum_{u \in F} \lambda_u\, u \;\Big|\;
      F \subset S \text{ finite},\; \lambda_u \geq 0\,\right\}
    \subset M_{\RR}\ .
\]
We say that $S$ is \emph{pointed} if $\omega$ is strongly convex, that is,
$\omega \cap (-\omega) = \{0\}$.  We say that $S$ is
\emph{saturated}  if $\omega \cap M = S$.

A pointed semigroup admits a unique minimal generating set, called its
\emph{Hilbert basis} $\mathcal{H}(S)$; it consists of the elements of
$S$ that cannot be expressed as a sum of two nonzero elements of $S$.

Given an affine semigroup $S$ one associates the \emph{semigroup
algebra}
\[
  \kk[S] = \bigoplus_{u \in S} \kk\cdot\chi^u,
    \qquad \chi^0 = 1,\quad
    \chi^u \cdot \chi^{u'} = \chi^{u+u'},
\]
and the affine toric variety $X(S) := \operatorname{Spec}\,\kk[S]$.
The variety $X(S)$ is normal if and only if $S$ is saturated
\cite[Theorem~1.3.5]{cox2011toric}.

\subsection*{Nash blowup and normalized Nash blowup of a toric variety}

The combinatorial description of the Nash blowup and normalized Nash
blowup of an affine toric variety in arbitrary characteristic is
due to Gonz\'alez-Sprinberg \cite{GS1} (characteristic zero, normal
case), Gonz\'alez P\'erez--Teissier \cite{GPTe14} (characteristic zero,
general case), and Duarte--Jeffries--N\'u\~nez Betancourt \cite{DJN24}
(prime characteristic).  We recall the description following
\cite[Section~1.9.2]{Sp20} and \cite{CDLAL}.

Let $X(S)$ be an affine toric variety given by a pointed semigroup
$S \subset M$, and let $\mathcal{H}(S) = \{h_1,\ldots,h_r\}$ be its Hilbert basis.
For a subset of $d$ elements $\{h_{i_1},\ldots,h_{i_d}\} \subset \mathcal{H}(S)$,
define the matrix $(h_{i_1}\cdots h_{i_d})$ whose columns are the vectors $h_{i_j}$. Let $p \geq 0$ be the characteristic of $\kk$. We denote
\[
  {\det}_p(h_{i_1}\cdots h_{i_d})
  = \begin{cases}
      \det(h_{i_1}\cdots h_{i_d}) & \text{if }p = 0 \ , \\
      \det(h_{i_1}\cdots h_{i_d}) \bmod p & \text{if }p > 0 \ .
    \end{cases}
\]

The affine charts of the Nash blowup and the normalized Nash blowup are
indexed by the subsets $A = \{h_{i_1},\ldots,h_{i_d}\} \subset \mathcal{H}(S)$
satisfying $\det_p(h_{i_1}\cdots h_{i_d}) \neq 0$.
For such a subset $A$ and each $h \in A$, without loss of generality, up to
reordering the indices, we may and will assume $A=\{h_1,\ldots,h_d\}$ and
$h= h_1$. We define
\begin{equation}\label{eq:GA}
  \mathcal{G}_A(h)
  = \left\{g - h \;\big|\;
      g \in \mathcal{H}(S)\setminus A,\;
      {\det}_p(g\; h_{2}\cdots h_{d}) \neq 0 \right\}.
\end{equation}
Computing $\mathcal{G}_A(h)$ for all $d$ elements of $A$, we define
$\mathcal{G}_A = \mathcal{H}(S) \cup \mathcal{G}_A(h_{1}) \cup \cdots
\cup \mathcal{G}_A(h_{d})$.
Let $S_A$ be the semigroup generated by $\mathcal{G}_A$ in $M$, and let
$\overline{S_A} = \operatorname{Cone}(S_A) \cap M$ be its saturation.
The affine toric varieties $X(S_A)$ and $X(\overline{S_A})$, taken over
all $A$ with
\begin{align*}
   {\det}_p(h_{i_1}\cdots h_{i_d}) \neq 0 \quad \text{and}\quad S_A \,\text{ pointed},
\end{align*}
form sets of covering affine charts of the Nash blowup and the normalized
Nash blowup of $X(S)$, respectively
(see \cite[Propositions~32 and~60]{GPTe14} for the characteristic zero
case and \cite[Theorem~1.9]{DJN24} for the prime characteristic case).

\section{Proof of the theorem}\label{sec:proof}

To achieve the proof of Theorem~\ref{thm:main} in the case $\Char(\kk)=3$,
we exhibit a pointed, saturated affine semigroup $S \subset \ZZ^5$ such
that one of the charts of the normalized Nash blowup of the normal affine
toric variety $X(S)$ is isomorphic to $X(S)$ itself, over any algebraically
closed field of characteristic~$3$.

Let $M = \ZZ^5$ and let $\omega \subset M_{\RR}$ be the cone generated
by the columns of the matrix
\begin{align*}
     B = \left[
     \begin{array}{rrrrrrrr}
    1 & 0 & 0 & 0 & 0 &  2 &  1 &  1 \\
    0 & 1 & 0 & 0 & 0 &  2 &  2 &  2 \\
    0 & 0 & 1 & 0 & 0 & -1 & -1 &  0 \\
    0 & 0 & 0 & 1 & 0 &  1 &  1 &  0 \\
    0 & 0 & 0 & 0 & 1 & -2 & -1 & -1
    \end{array}\right]\ .
\end{align*}

Set $S = \omega \cap M$ and $X = X(S)$.
We denote by $h_i$ the vector corresponding to the $i$-th column of
$B$, for $i \in \{1,\ldots,8\}$.

\begin{lemma}\label{lem:pointed}
  The affine semigroup $S$ is pointed.
\end{lemma}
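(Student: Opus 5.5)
To prove that $S$ is pointed, we show that $\omega=\operatorname{Cone}(S)$ is strongly convex. Since $S=\omega\cap M$ and $\omega$ is a rational polyhedral cone, $\operatorname{Cone}(S)=\omega$. It therefore suffices to exhibit a linear functional $\ell\in\operatorname{Hom}(M,\ZZ)$ that is strictly positive on every generator $h_1,\ldots,h_8$. Indeed, if $v\in\omega\cap(-\omega)$, write $v=\sum\lambda_i h_i$ with $\lambda_i\ge 0$. Then $\ell(v)\ge 0$ and $\ell(-v)\ge 0$, so $\ell(v)=0$. This forces all $\lambda_i=0$, hence $v=0$.

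We look for $\ell=(a_1,\ldots,a_5)$. Positivity on $h_1,\ldots,h_5$ just means that every $a_i>0$. On the last three columns we need
\begin{align*}
2a_1+2a_2-a_3+a_4-2a_5&>0,\\
a_1+2a_2-a_3+a_4-a_5&>0,\\
a_1+2a_2-a_5&>0.
\end{align*}
Take $a_3=a_4=a_5=1$ and $a_1=a_2=1$. This gives values $2+2-1+1-2=2$, $1+2-1+1-1=2$ and $1+2-1=2$, all positive. So $\ell=(1,1,1,1,1)$, the sum of coordinates, works. Its values on the eight generators are $1,1,1,1,1,2,2,2$.

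There is no real obstacle here; the only step requiring care is the arithmetic check of the last three columns, which we have just verified. We can also remark that, since $\ell$ is integral and takes positive values on $\omega\setminus\{0\}$, only finitely many elements of $S$ have bounded $\ell$-value. This is consistent with the existence of the Hilbert basis used later.
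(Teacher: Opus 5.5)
Your proposal is correct and is essentially the paper's proof: the paper uses the same functional $\mathcal{L}(x_1,\ldots,x_5)=x_1+\cdots+x_5$ and the same values $1$ on $h_1,\ldots,h_5$ and $2$ on $h_6,h_7,h_8$. You also spell out the short step from strict positivity on the generators to $\omega\cap(-\omega)=\{0\}$, which the paper leaves implicit.
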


\begin{proof}
  The linear functional $\mathcal{L}(x_1,\ldots,x_5) = x_1+x_2+x_3+x_4+x_5$
  satisfies $\mathcal{L}(h_i) = 1$ for $i \in \{1,\ldots,5\}$ and
  $\mathcal{L}(h_j) \geq 2$ for $j \in \{6,7,8\}$.
  In particular $\mathcal{L}$ is strictly positive on $S\setminus\{0\}$, which forces $S$ to be pointed.
\end{proof}

\begin{lemma}\label{lem:hilbert}
  The Hilbert basis of $S$ is $\mathcal{H}(S) = \{h_1,\ldots,h_8,h_9\}$,
  where $h_9 = (1,1,0,1,-1)$.
\end{lemma}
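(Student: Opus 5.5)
To prove the lemma we need two things: first, that each of $h_1,\dots,h_9$ lies in $S$ and is irreducible in $S$; second, that every lattice point of $\omega$ is a non-negative integer combination of $h_1,\dots,h_9$.

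\textbf{Membership.} The vectors $h_1,\dots,h_8$ are the generators of $\omega$ and lie in $M$, so they belong to $S$. For $h_9$, note that
\[
h_9=(1,1,0,1,-1)=\tfrac12\bigl(h_6+h_8\bigr)=\tfrac12\bigl((2,2,-1,1,-2)+(0,0,1,1,0)\bigr),
\]
since $h_8=(1,2,0,0,-1)$ gives $h_8-h_2-h_1+\dots$ is not needed here. More directly, $h_9=\tfrac12 h_6+\tfrac12 h_3+\tfrac12 h_4$, and a one-line check confirms this. So $h_9\in\omega\cap M=S$.

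\textbf{Irreducibility.} Use the functional $\mathcal L$ from Lemma~\ref{lem:pointed}. Its values are $\mathcal L(h_i)=1$ for $i\le 5$, $\mathcal L(h_6)=2$, $\mathcal L(h_7)=2$, $\mathcal L(h_8)=3$ and $\mathcal L(h_9)=2$. Elements with $\mathcal L=1$ are automatically irreducible. An element with $\mathcal L=2$ is reducible only if it equals a sum of two elements of $S$ with $\mathcal L=1$. For $h_8$ we must also exclude decompositions of the form $1+2$.

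The first step is to find the facet inequalities of $\omega$, computed with the same kind of determinant and kernel calculation used for $B$. Using them, we determine all lattice points of $\omega$ with $\mathcal L=1$. I expect these to be exactly $h_1,\dots,h_5$, since a point with $\mathcal L=1$ is, after scaling, a point of the slice polytope $\omega\cap\{\mathcal L=1\}$. Given this list, we check that no $h_6,h_7,h_9$ is $h_i+h_j$ with $i,j\le5$: each of them has a negative coordinate, whereas $h_i+h_j\ge 0$ coordinatewise. For $h_8$ we check that $h_8-h_i\notin S$ for $i\le5$ by testing the facet inequalities directly.

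\textbf{Generation.} This is the main step. Triangulate $\omega$ into simplicial cones spanned by five of $h_1,\dots,h_9$. For each simplicial cone $\sigma=\operatorname{Cone}(v_1,\dots,v_5)$, every lattice point of $\sigma$ is a non-negative integer combination of $v_1,\dots,v_5$ plus a point of the half-open parallelepiped $\{\sum \lambda_i v_i: 0\le\lambda_i<1\}\cap M$. That parallelepiped has $|\det(v_1\cdots v_5)|$ points, which we enumerate via the Smith normal form or by solving over $\tfrac1{\det}\ZZ$. It then suffices to check that each such point is a sum of elements from $\{h_1,\dots,h_9\}$.

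The obstacle is keeping this case analysis small. My plan is to first use $h_9$ and the unimodular cones containing $h_1,\dots,h_5$ to refine a triangulation until most cones have determinant $\pm1$. This works because $\det(h_1\cdots h_5)=1$, and replacing one $h_i$ by $h_6,h_7,h_8$ gives determinants equal to single coordinates of those vectors, namely $\pm1$ or $\pm2$. Only the few cones of determinant $\pm2$ then need an explicit parallelepiped point, which should turn out to be $h_9$ or a sum of listed vectors. Alternatively, the computation can be certified by SageMath's Normaliz interface, as the authors did.
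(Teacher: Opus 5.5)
Your strategy is the paper's: the functional $\mathcal L$ for minimality, and a simplicial subdivision of $\omega$ for generation. As written, though, the proposal does not prove generation, which you rightly call the main step. No triangulation is written down, its covering of $\omega$ is not checked, and the parallelepiped points of the determinant-$2$ cones are only predicted.

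The paper supplies exactly this piece. It uses the seven cones $\operatorname{Cone}(h_1,h_2,h_3,h_4,h_5)$, $\operatorname{Cone}(h_1,h_2,h_4,h_5,h_7)$, $\operatorname{Cone}(h_1,h_2,h_3,h_6,h_8)$, $\operatorname{Cone}(h_1,h_2,h_4,h_6,h_7)$, $\operatorname{Cone}(h_1,h_2,h_3,h_4,h_9)$, $\operatorname{Cone}(h_1,h_2,h_4,h_6,h_9)$ and $\operatorname{Cone}(h_1,h_2,h_3,h_6,h_9)$. All have determinant $\pm1$. So once $h_9$ is used as a generator, there are no nonzero parallelepiped points, and the determinant-$2$ cones you anticipate never occur.

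To complete your version you need such an explicit list plus a covering check. A convenient check is that each $4$-element wall of each cone either lies in a facet of $\omega$, or is also a wall of another listed cone lying on the opposite side of it. Covering is all that generation needs; disjointness of interiors is irrelevant.

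Once generation is known, the classification of the lattice points with $\mathcal L=1$ that you only ``expect'' is automatic. A nonnegative integer combination of $h_1,\dots,h_9$ with $\mathcal L$-value $1$ must be a single $h_i$ with $i\le 5$. So the separate facet computation planned for your irreducibility step can be dropped.

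Two slips need fixing. First, the identity $h_9=\tfrac12(h_6+h_8)$ is false; the relation you then use, $2h_9=h_3+h_4+h_6$, is the correct one. Second, $\mathcal L(h_8)=1+2+0+0-1=2$, not $3$. This is harmless, because $h_8$ also has a negative fifth coordinate and is excluded by the same argument you give for $h_6$, $h_7$, $h_9$.
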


\begin{proof}
Let $\omega= \operatorname{Cone}(h_1,\ldots,h_8)$, and let $H=\{h_1,\ldots,h_8,h_9\}\subset M$. We now prove that $H$ is the Hilbert basis of $S$.

Consider the simplicial subdivision of $\omega$ given by the cones:
    \begin{align*}
        \sigma_1&=\operatorname{Cone} (h_1,h_2,h_3,h_4,h_5)\ , & \quad &  \sigma_5=\operatorname{Cone}(h_1,h_2,h_3,h_4,h_9)\ , \\
        \sigma_2&=\operatorname{Cone}(h_1,h_2,h_4,h_5,h_7)\ , & \quad & \sigma_6=\operatorname{Cone}(h_1,h_2,h_4,h_6,h_9)\ , \\
        \sigma_3&= \operatorname{Cone}(h_1,h_2,h_3,h_6,h_8)\ , & \quad &  \sigma_7= \operatorname{Cone}(h_1,h_2,h_3,h_6,h_9)\ . \\
        \sigma_4&= \operatorname{Cone}(h_1,h_2,h_4,h_6,h_7)\ ,     
    \end{align*}
Observe that the above subdivision is in fact unimodular, that is, $\det(\sigma_i)=1$ for $i\in \{1,\ldots,7\}$. Therefore, the set $\{h_1,\ldots,h_8,h_9\}$ generates the semigroup $S$. It remains to prove that the set $H$ is minimal. Since each $h_i$ ($i=1,\ldots,8$) is a primitive ray generator of $\omega$, it belongs to $\mathcal{H}(S)$. The linear functional $\mathcal{L}$ from Lemma~\ref{lem:pointed} satisfies
$\mathcal{L}(h_9)=2$, so if $h_9 = u + v$ with $u,v \in S\setminus\{0\}$,
then $\mathcal{L}(u)=\mathcal{L}(v)=1$. The only elements of $S$ with
$\mathcal{L}$-value equal to $1$ are $h_1,\ldots,h_5$, but no sum of two
of these equals $h_9=(1,1,0,1,-1)$. Thus $h_9$ is part of the Hilbert basis.
\end{proof}

Observe that
\[
  {\det}_3(h_1,h_2,h_4,h_5,h_6) = 2 \pmod 3.
\]
Set $A = \{h_1,h_2,h_4,h_5,h_6\} \subset \mathcal{H}(S)$.
We will show that the chart $X(\overline{S_A})$ of the normalized Nash
blowup of $X$ is isomorphic to $X$ itself, which gives a one-step loop.

\medskip
We have $\mathcal{H}(S)\setminus A = \{h_3,h_7,h_8,h_9\}$.
The nonzero determinant conditions (where at the start of each block we
indicate the element of $A$ being replaced) are as follows:
\begin{align*}
  h_1\colon\quad
  &{\det}_3(h_3,h_2,h_4,h_5,h_6)\neq 0,\quad
   {\det}_3(h_7,h_2,h_4,h_5,h_6)\neq 0, \\
  &{\det}_3(h_8,h_2,h_4,h_5,h_6)\neq 0,\quad
   {\det}_3(h_9,h_2,h_4,h_5,h_6)\neq 0. \\[4pt]
  h_2\colon\quad
  &{\det}_3(h_1,h_3,h_4,h_5,h_6)\neq 0,\quad
   {\det}_3(h_1,h_7,h_4,h_5,h_6)=0, \\
  &{\det}_3(h_1,h_8,h_4,h_5,h_6)\neq 0,\quad
   {\det}_3(h_1,h_9,h_4,h_5,h_6)\neq 0. \\[4pt]
  h_4\colon\quad
  &{\det}_3(h_1,h_2,h_3,h_5,h_6)\neq 0,\quad
   {\det}_3(h_1,h_2,h_7,h_5,h_6)=0, \\
  &{\det}_3(h_1,h_2,h_8,h_5,h_6)=0,\quad
   {\det}_3(h_1,h_2,h_9,h_5,h_6)\neq 0. \\[4pt]
  h_5\colon\quad
  &{\det}_3(h_1,h_2,h_4,h_3,h_6)\neq 0,\quad
   {\det}_3(h_1,h_2,h_4,h_7,h_6)\neq 0, \\
  &{\det}_3(h_1,h_2,h_4,h_8,h_6)\neq 0,\quad
  {\det}_3(h_1,h_2,h_4,h_9,h_6)\neq 0. \\[4pt]
  h_6\colon\quad
  &{\det}_3(h_1,h_2,h_4,h_5,h_3)\neq 0,\quad
   {\det}_3(h_1,h_2,h_4,h_5,h_7)\neq 0, \\
  &{\det}_3(h_1,h_2,h_4,h_5,h_8)=0,\quad
   {\det}_3(h_1,h_2,h_4,h_5,h_9)=0.
\end{align*}

From \eqref{eq:GA} we obtain
\begin{align*}
  \mathcal{G}_A
  &= \mathcal{H}(S)
     \;\cup\; \{h_3-h_1,\, h_7-h_1,\, h_8-h_1,\, h_9-h_1\} \\
  &\phantom{{}= \mathcal{H}(S)}
     \;\cup\; \{h_3-h_2,\, h_8-h_2,\, h_9-h_2\}  \\
  &\phantom{{}= \mathcal{H}(S)}
     \;\cup\; \{h_3-h_4,\, h_9-h_4\} \\
  &\phantom{{}= \mathcal{H}(S)}
     \;\cup\; \{h_3-h_5,\, h_7-h_5,\, h_8-h_5,\, h_9-h_5\} \\
  &\phantom{{}= \mathcal{H}(S)}
     \;\cup\; \{h_3-h_6,\, h_7-h_6\}.
\end{align*}

Consider the set
\[
  H = \{h_1,\; h_3-h_2,\; h_9-h_2,\; h_3-h_4,\; h_9-h_4,\;
         h_3-h_5,\; h_7-h_5,\; h_3-h_6,\; h_7-h_6\}.
\]
Let $R$ denote the semigroup generated by $H$.
We claim that $R = S_A$.  Since $H \subset \mathcal{G}_A$ we have
$R \subset S_A$.  Conversely, note that
$h_8-h_2 = h_9-h_4$ (which lies in $H$), and that every other element
of $\mathcal{G}_A$ decomposes as a sum of elements of $H$:
\begin{align*}
   h_2    &= (h_7-h_6) + (h_9-h_4),  &    h_9-h_5 &= (h_7-h_5) + (h_3-h_2), \\
   h_4    &= (h_7-h_6) + (h_9-h_2),  &    h_3     &= (h_3-h_5) + h_5, \\
   h_5    &= (h_7-h_6) + h_1,        &    h_7     &= (h_7-h_5) + h_5, \\
   h_6    &= (h_7-h_5) + h_1,        &    h_8-h_1 &= (h_7-h_1) + (h_3-h_4) , \\
   h_3-h_1 &= (h_3-h_5) + (h_7-h_6), &    h_9-h_1 &= (h_7-h_1) + (h_3-h_2), \\
   h_7-h_1 &= (h_7-h_5) + (h_7-h_6), &    h_8     &= (h_8-h_1) + h_1, \\
   h_8-h_5 &= (h_7-h_5) + (h_3-h_4), &    h_9     &= (h_9-h_1) + h_1.
\end{align*}

Hence $S_A \subset R$, so indeed $R = S_A$.

It remains to show that $\overline{S_A} \cong S$.
Consider the automorphism $U\colon M \to M$ given by the unimodular matrix
\[
  U =\left[ \begin{array}{rrrrr}
   -1 &  0 &  1 &  0 & -2 \\
    0 & -1 &  0 &  0 & -2 \\
    0 &  1 &  0 &  1 &  2 \\
    0 &  0 &  0 & -1 & -1 \\
    1 &  0 &  0 &  0 &  2
  \end{array}\right] \ .
\]
A direct computation yields that $U$ maps the generators of $S$
bijectively onto the generators of $S_A$:
\begin{align*}
  U(h_1) &= h_7-h_6, & U(h_4) &= h_3-h_4, & U(h_7) &= h_3-h_5, \\
  U(h_2) &= h_3-h_2, & U(h_5) &= h_3-h_6, & U(h_8) &= h_9-h_2, \\
  U(h_3) &= h_1,     & U(h_6) &= h_7-h_5, & U(h_9) &= h_9-h_4.
\end{align*}
Since $U$ is an automorphism of the lattice $M$, it induces an
isomorphism between the semigroups $S_A$ and $S$.
Since $S$ is pointed and saturated, so is $S_A$, which yields $S_A = \overline{S_A}$.
In particular, $X(\overline{S_A})$ is an affine chart of the normalized Nash
blowup of $X(S)$ isomorphic to $X(S)$ itself,
which is precisely the statement of Theorem~\ref{thm:main}. \qed

\begin{remark}
Since $S_A$ is pointed and saturated, the chart $X(\overline{S_A})=X(S_A)$ is
isomorphic to $X(S)$ not only as a chart of the \emph{normalized} Nash
blowup, but already as a chart of the (non-normalized) Nash blowup of
$X(S)$. Hence $X(S)$ is also a counterexample to the Nash blowup
conjecture in characteristic three. This should be contrasted with the
classical example of Nobile \cite{Nob75}, namely the curve $x^p - y^q = 0$
in characteristic $p$ with $p\neq q$, whose Nash blowup is isomorphic to itself
but which is non-normal. The variety $X(S)$ constructed here provides a
\emph{normal} counterexample to the Nash blowup conjecture in positive
characteristic.
\end{remark}

\begin{remark}
    Let $\kk[x_1,\ldots,x_9]$ be the polynomial ring in nine variables. The map $\kk[x_1\ldots,x_9] \to \kk[S]$ given by $x_i \mapsto \chi^{h_i}$ is surjective. Its kernel $I$ is generated by
    \begin{align*}
        x_9^2-x_3x_4x_6\ ,& & x_7x_8-x_2^2x_6 \ , &     &x_1x_7-x_5x_6 \ , &  &x_7x_9-x_2x_4x_6 \ ,& &
         x_8x_9-x_2x_3x_6 \ , \\   x_5x_9-x_1x_2x_4 \ , & &  x_5x_8-x_1x_2^2 \ , &  & x_4x_8-x_2x_9 \ , & &  x_3x_7-x_2x_9 \ , &  & x_3x_5x_6-x_1x_2x_9 \ .
    \end{align*}
    Therefore, the toric variety $X(S)$, whose normalized Nash blowup contains an affine chart isomorphic to itself, is realized as the zero locus of $I$ in $\kk^9$. Moreover, the singular locus of $X(S)$, in the coordinates $x_1,\ldots,x_9$, is the union of the following linear subspaces:
    \begin{align*}
        & \{x_1=x_2=x_5=x_6=x_7=x_8=x_9=0\}\ , \quad  \{x_1=x_3=x_4=x_6=x_7=x_8=x_9=0\} \ ,\\
        & \{ x_2=x_4=x_5=x_6=x_7=x_8=x_9=0\} \ , \quad \{x_2=x_3=x_6=x_7=x_8=x_9=0\}\ , \quad \text{and}\\
       &
        \{x_2=x_3=x_4=x_5=x_7=x_8=x_9=0\} \ . 
    \end{align*}
\end{remark}
\bibliographystyle{alpha}
\bibliography{ref}

\newcommand{\etalchar}[1]{$^{#1}$}
\begin{thebibliography}{CDLAL25b}

\bibitem[ALP{\etalchar{+}}11]{Ataetal}
Atanas Atanasov, Christopher Lopez, Alexander Perry, Nicholas Proudfoot, and Michael Thaddeus.
\newblock Resolving toric varieties with {N}ash blowups.
\newblock {\em Exp. Math.}, 20(3):288--303, 2011.

\bibitem[CDLAL25a]{CDLLcharfree}
Federico Castillo, Daniel Duarte, Maximiliano Leyton-\'Alvarez, and Alvaro Liendo.
\newblock Characteristic-free normalized nash blowup of toric varieties.
\newblock {\em arXiv:2501.09811}, 2025.

\bibitem[CDLAL25b]{CDLL3d}
Federico Castillo, Daniel Duarte, Maximiliano Leyton-\'Alvarez, and Alvaro Liendo.
\newblock Non-normalized nash blowup fails to resolve singularities in dimension three.
\newblock {\em arXiv:2511.01772}, 2025.

\bibitem[CDLAL25c]{CDLLcomputational}
Federico Castillo, Daniel Duarte, Maximiliano Leyton-\'Alvarez, and Alvaro Liendo.
\newblock On a computational approach to the nash blowup problem.
\newblock {\em arXiv:2511.17862}, 2025.

\bibitem[CDLAL26]{CDLAL}
Federico Castillo, Daniel Duarte, Maximiliano Leyton-\'Alvarez, and Alvaro Liendo.
\newblock Nash blowup fails to resolve singularities in dimensions four and higher.
\newblock {\em Annals of Mathematics}, 203(2):677--694, March 2026.

\bibitem[CLS11]{cox2011toric}
David~A. Cox, John~B. Little, and Henry~K. Schenck.
\newblock {\em Toric varieties}, volume 124.
\newblock American Mathematical Soc., 2011.

\bibitem[DDR25]{DDR}
Tha\'is~M. Dalbelo, Daniel Duarte, and Maria Aparecida~Soares Ruas.
\newblock Nash blowups of 2-generic determinantal varieties in positive characteristic.
\newblock {\em Pure Appl. Math. Q.}, 21(4):1557--1575, 2025.

\bibitem[DJNnB24]{DJN24}
Daniel Duarte, Jack Jeffries, and Luis N\'u\~nez Betancourt.
\newblock Nash blowups of toric varieties in prime characteristic.
\newblock {\em Collect. Math.}, 75(3):629--637, 2024.

\bibitem[DNnB22]{DuNu22}
Daniel Duarte and Luis N\'u\~nez Betancourt.
\newblock Nash blowups in prime characteristic.
\newblock {\em Rev. Mat. Iberoam.}, 38(1):257--267, 2022.

\bibitem[DT18]{DG}
Daniel Duarte and Daniel~Green Tripp.
\newblock Nash modification on toric curves.
\newblock In {\em Singularities, algebraic geometry, commutative algebra, and related topics}, pages 191--202. Springer, Cham, 2018.

\bibitem[Dua14]{DuarSurf}
Daniel Duarte.
\newblock Nash modification on toric surfaces.
\newblock {\em Rev. R. Acad. Cienc. Exactas F\'{\i}s. Nat. Ser. A Mat. RACSAM}, 108(1):153--171, 2014.

\bibitem[Ful93]{fulton1993introduction}
William Fulton.
\newblock {\em Introduction to toric varieties}.
\newblock Number 131 in Annals of mathematics studies. Princeton university press, 1993.

\bibitem[GM12]{GrMi12}
Dima Grigoriev and Pierre~D. Milman.
\newblock Nash resolution for binomial varieties as {E}uclidean division. {A} priori termination bound, polynomial complexity in essential dimension 2.
\newblock {\em Adv. Math.}, 231(6):3389--3428, 2012.

\bibitem[GPT14]{GPTe14}
Pedro~D. Gonz\'alez~P\'erez and Bernard Teissier.
\newblock Toric geometry and the {S}emple-{N}ash modification.
\newblock {\em Rev. R. Acad. Cienc. Exactas F\'is. Nat. Ser. A Mat. RACSAM}, 108(1):1--48, 2014.

\bibitem[GS77]{GS1}
Gerardo Gonzalez~Sprinberg.
\newblock \'{E}ventails en dimension {$2$} et transform\'{e} de {N}ash.
\newblock {\em Publications du Centre de Math\'{e}matiques de l'E.N.S.}, pages 1--68, 1977.

\bibitem[GS82]{GS2}
Gerardo Gonzalez-Sprinberg.
\newblock R\'{e}solution de {N}ash des points doubles rationnels.
\newblock {\em Ann. Inst. Fourier (Grenoble)}, 32(2):x, 111--178, 1982.

\bibitem[GS09]{GS3}
Gerard Gonzalez-Sprinberg.
\newblock On {N}ash blow-up of orbifolds.
\newblock In {\em Singularities---{N}iigata--{T}oyama 2007}, volume~56 of {\em Adv. Stud. Pure Math.}, pages 133--149. Math. Soc. Japan, Tokyo, 2009.

\bibitem[Hir83]{Hi83}
Heisuke Hironaka.
\newblock On {N}ash blowing-up.
\newblock In {\em Arithmetic and geometry, {V}ol. {II}}, volume~36 of {\em Progr. Math.}, pages 103--111. Birkh\"auser, Boston, MA, 1983.

\bibitem[Nob75]{Nob75}
A.~Nobile.
\newblock Some properties of the {N}ash blowing-up.
\newblock {\em Pacific J. Math.}, 60(1):297--305, 1975.

\bibitem[Oda83]{oda1983convex}
Tadao Oda.
\newblock {\em Convex bodies and algebraic geometry: an introduction to the theory of toric varieties}.
\newblock Springer, 1983.

\bibitem[Reb77]{Reb}
Vaho Rebassoo.
\newblock Desingularization properties of the {N}ash blowing-up process, 1977.
\newblock Ph. D. dissertation, University of Washington.

\bibitem[{Sag}24]{sagemath}
{Sage Developers}.
\newblock {\em {S}ageMath, the {S}age {M}athematics {S}oftware {S}ystem ({V}ersion 10.4)}, 2024.
\newblock {\tt https://www.sagemath.org}.

\bibitem[Sem54]{Se54}
J.~G. Semple.
\newblock Some investigations in the geometry of curve and surface elements.
\newblock {\em Proc. London Math. Soc. (3)}, 4:24--49, 1954.

\bibitem[Spi90]{Sp90}
Mark Spivakovsky.
\newblock Sandwiched singularities and desingularization of surfaces by normalized {N}ash transformations.
\newblock {\em Ann. of Math. (2)}, 131(3):411--491, 1990.

\bibitem[Spi20]{Sp20}
Mark Spivakovsky.
\newblock Resolution of singularities: an introduction.
\newblock In {\em Handbook of geometry and topology of singularities. {I}}, pages 183--242. Springer, Cham, [2020] \copyright 2020.

\end{thebibliography}

\end{document}